\newcommand{\R}{\mathbb{R}}
\newcommand{\scal}[2]{\left\langle #1, #2 \right\rangle}
\DeclareMathOperator{\conv}{conv}
\newtheorem{theorem}{Theorem}
\newtheorem{lemma}[theorem]{Lemma}
\theoremstyle{remark}
\newtheorem{remark}[theorem]{Remark}
\theoremstyle{definition}
\title{\vspace{-3em}A note on the extremal noncentral sections of the cross-polytope}
\author{Ruoyuan Liu\thanks{Carnegie Mellon University; Pittsburgh, PA 15213, USA; Email: ruoyuanl@andrew.cmu.edu}
\ and 
Tomasz Tkocz\thanks{Carnegie Mellon University; Pittsburgh, PA 15213, USA; Email: ttkocz@math.cmu.edu. Research supported in part by the Collaboration Grants from the Simons Foundation.}
}
\date{15th October 2019}
\begin{document}

\maketitle

\begin{abstract}
We find minimal and maximal length of intersections of lines at a fixed distance to the origin with the cross-polytope. We also find maximal volume noncentral sections of the cross-polypote by hyperplanes which are at a fixed large distance to the origin and minimal volume sections by symmetric slabs of a large fixed width. This parallels recent results about noncentral sections of the cube due to Moody, Stone, Zach and Zvavitch.
\end{abstract}

\bigskip

\begin{footnotesize}
\noindent {\em 2010 Mathematics Subject Classification.} Primary 52A40; Secondary 52A38.

\noindent {\em Key words. convex bodies, cross-polytope, volume, noncentral sections} 
\end{footnotesize}

\bigskip

\section{Introduction}
 
The study of sections and projections of various classes of convex bodies is one of the main topics in convex geometry and geometric tomography (see e.g. monographs \cite{Ga, Kol}). Questions of extremal volume sections have received considerable attention in the last few decades, particularly in the case of sections of the $n$-dimensional cube $B_\infty^n = [-1,1]^n$. Hadwiger in \cite{Ha} determined minimal volume sections of $B_\infty^n$ by $n-1$-dimensional subspaces. In response to Good's questions (motivated by geometry of numbers), Hensley in \cite{Hen} developed a  probabilistic approach, reproved Hadwiger's result and established upper bounds. Continuing his ideas, Vaaler in \cite{Vaa} found minimal volume sections of the cube by subspaces of arbitrary dimension. Using Fourier analytic methods, Ball in \cite{Ball-sec} (see also \cite{NP}) solved the question of maximal volume sections of $B_\infty^n$ by subspaces of dimension $n-1$ and extended this later in \cite{Ball-sec2} to all dimensions at least $n/2$ or dividing $n$ (his argument was based on an ingenious use of the Brascamp-Lieb inequality). There are also extensions and related results for the volume replaced by other measures (for instance, see \cite{KK1, KK2, BGMN, Z1, Z2}). Barthe and Kodobsky in \cite{BK} studied extremal volume sections of the cube by symmetric slabs (see also K\"onig and Koldobsky's follow-up \cite{KK0} furnishing a complete solution for $2$ and $3$-dimensional cubes which supports V. Milman's conjecutre). We also mention a very recent paper \cite{KR} by K\"onig and Rudelson about noncentral sections of the cube where they establish that $\min |B_\infty^n \cap H|$, the minimum being over all $n-k$ dimensional affine subspaces at distance at most $\frac{1}{2}$ from the origin, is lower bounded by a positive constant which depends only on $k$. Another direction of extending results for the cube is by looking at sections of $\ell_p$ balls, that is the sets $B_p^n = \{x \in \R^n, \sum_{i=1}^n |x_i|^p \leq 1\}$ with $p \in [1,\infty]$. When $p=1$, $B_1^n$ is the cross-polytope, that is $B_1^n = \conv\{\pm e_1, \ldots, \pm e_n\}$, where the vectors $e_1, \dots, e_n$ denote the standard basis vectors in $\R^n$ ($e_j$ having $1$ at $j$th coordinate and $0$ elsewhere). Extending Vaaler's approach, Meyer and Pajor in \cite{MeyP} showed that for every subspace $H$ of $\R^n$, the function $[1,\infty] \ni p \mapsto \frac{|B_p^n \cap H|}{|B_p^k|}$ increases (here and throughout, $|\cdot|$ denotes Lebesgue measure). Since for $p=2$, the value of this function is $1$ regardless $H$, this gives that the maximal volume sections of $B_p^n$ for $p \in [1,2]$ and minimal ones for $p \in [2,\infty]$ are by attained by coordinate subspaces. There are also several results for projections (see for instance Barthe and Naor's work \cite{BN} or a recent paper \cite{Iv} by Ivanov). Along the way, important probabilistic tools have been discovered to work with the uniform measure on $B_p^n$, notably its probabilistic represenatation from \cite{BGMN} due to Barthe, Gu\'edon, Mendelson and Naor. A unified approach to sections and projections for central hyperplanes has recently been developed in \cite{ENT1}. There are also analogues and extensions to complex setting (see, e.g. \cite{KK-C, KolZ, OP}).

Finding extremal volume sections by noncentral subspaces pose of course additional challenges. Recently Moody, Stone, Zach and Zvavitch in \cite{zva} determined minimal and maximal length of intersections of lines at a fixed distance to the origin with the $n$-dimensional cube $B_\infty^n$. They also established maximal volume noncentral sections of the cube by hyperplanes which are at a fixed distance $t$ to the origin when $t > \sqrt{n-1}$ and minimal volume sections by symmetric slabs of a fixed width $2t$. 

The aim of this paper is to parallel these results for the cross-polytope. We largely follow their direct approach of reducing the whole problem to low-dimensional explicit optimisation questions (either by a case analysis based on combinatorially limited extreme situations, or projection type arguments), but of course the technical details are somewhat different. We present our results in the next section, which is then followed by the section containing all proofs. We use $\scal{x}{y} = \sum_{i=1}^n x_iy_i$ to denote the standard scalar product of two vectors $x$, $y$ in $\R^n$, $|x| = \sqrt{\scal{x}{x}}$ to denote the length of $x$, $\conv A$ to denote the convex hull of a set $A$ in $\R^n$ and $[x,y] = \conv\{x,y\}$ to denote the segment in $\R^n$ with endpoints $x$, $y$.

%Konin-Rudelson - non-central
%notation: scal, [,] segment, |.| volume, standard basis

\section{Results}\label{sec:res}

The maximal length of noncentral sections by lines are attained at $2$-dimensional sections by coordinate subspaces, which is the content of our first result (cf. Theorem 1 from \cite{zva}). Note that the length of the maximal section given by \eqref{eq:max-1-dim} below as a function of $t$, the distance of lines to the origin, is discontinuous at one point, namely $t = \frac{3}{4}$. 

\begin{theorem}\label{thm:max-1-dim}
For every $t \in [0,1]$, let $\mathscr{L}_t$ be the set of lines in $\R^n$ which are at distance $t$ away from the origin. For $n \geq 2$, we have 
\begin{equation}\label{eq:max-1-dim}
\max_{\ell \in \mathscr{L}_t} |B_1^n \cap \ell| = \begin{cases} \frac{2}{t+\sqrt{1-t^2}}, & t \in [0,\frac{1}{\sqrt{2}}], \\
t- \sqrt{t^2-\frac{1}{2}}, & t \in (\frac{1}{\sqrt{2}},\frac{3}{4}], \\
2 - 2t, & t \in (\frac{3}{4},1]. \end{cases}
\end{equation}
The maximum is attained if $\ell$ is contained in a $2$-dimensional coordinate subspace, that is spanned by $e_i$ and $e_j$ for some $i \neq j$.
\end{theorem}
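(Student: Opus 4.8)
The plan is to reduce the $n$-dimensional question to an explicit one-dimensional optimisation over configurations of coefficients, and then to an elementary case analysis on the two faces of $B_1^n$ that the extremal chord meets. First I would reformulate the quantity at hand. Since $B_1^n$ is convex and compact, for $\ell\in\mathscr{L}_t$ the set $B_1^n\cap\ell$ is a segment $[a,b]$ with $a,b\in\partial B_1^n$; conversely, prolonging any segment with endpoints in $B_1^n$ along its own line to the boundary changes neither the line nor its distance to the origin and can only increase the length. Hence $\max_{\ell\in\mathscr{L}_t}|B_1^n\cap\ell|=\max\{|a-b|:a,b\in B_1^n,\ \mathrm{dist}(0,\mathrm{aff}\{a,b\})=t\}$. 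Parametrising the line as $\{m+s\theta:s\in\R\}$ with $|\theta|=1$, $\scal{m}{\theta}=0$ and $|m|=t$, the chord length is the width of the sublevel set $\{s:g(s)\le1\}$ of the convex, piecewise-linear function $g(s)=\|m+s\theta\|_1=\sum_{i=1}^n|m_i+s\theta_i|$, where the coefficients satisfy $\sum_i\theta_i^2=1$, $\sum_i m_i^2=t^2$ and $\sum_i m_i\theta_i=0$. The first crude bound $g(s)\ge|m_i+s\theta_i|$ for each $i$ already gives width $\le 2/\|\theta\|_1\le 2$, which hints that the optimum concentrates $\theta$ on few coordinates.

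The heart of the argument, and what I expect to be the main obstacle, is reducing this to a configuration supported on exactly two coordinates, i.e. to a chord lying in a coordinate $2$-plane, where $B_1^n$ meets it in the square $\{|x|+|y|\le1\}$. The idea is to group the coordinates by the sign pattern of the endpoints: for every coordinate $i$ on which $a_i$ and $b_i$ have the same sign, the map $s\mapsto m_i+s\theta_i$ keeps a constant sign along the whole chord, so all such coordinates can be amalgamated into a single one (summing the $m_i$'s and the $\theta_i$'s) without affecting $g$ on $[a,b]$ nor the constraints governing the length and the distance $t$. The genuinely delicate part is the coordinates on which the sign changes along the chord; handling these, and then arguing that the resulting low-dimensional optimisation is extremised when only two coordinates remain active (equivalently, that the extremal $2$-dimensional central section of $B_1^n$ is a coordinate one), is where the real work lies, and where I would invoke either a perturbation/exchange argument preserving the two norm constraints and orthogonality, or the projection-type reasoning signalled in the introduction.

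Granting this reduction, the problem becomes the concrete planar one of finding the longest chord of the square $K=\{(x,y):|x|+|y|\le1\}$ at distance $t$ from the origin, which I would settle by a dichotomy on which pair of edges the chord meets. A chord joining two \emph{opposite} (parallel) edges exists only for $t\le\tfrac{1}{\sqrt2}$, since such lines reach distance at most the inradius $\tfrac{1}{\sqrt2}$; writing the endpoints on $x+y=1$ and $x+y=-1$ and imposing the distance constraint, the length to be maximised is $L=\sqrt{2u^2-4u+4}$ with the feasibility bound $tL\le u$, and pushing $u$ to equality and eliminating it yields $L^2(1-2t^2)+4tL-4=0$, whose relevant root factors as $\tfrac{2}{t+\sqrt{1-t^2}}$, the first regime. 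A chord cutting off a \emph{corner} (two adjacent edges) has, in terms of the angle $\psi$ of the line's normal, length $\tfrac{2(\cos\psi-t)}{\cos 2\psi}$, valid for $\cos\psi\ge t$.

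It remains to maximise the corner-cutting length over $\psi$. Setting $c=\cos\psi$ and differentiating $\tfrac{2(c-t)}{2c^2-1}$ leads to $2c^2-4ct+1=0$, with admissible root $c^\ast=t+\sqrt{t^2-\tfrac12}$, which lies in the feasible range $c\le1$ precisely when $t\le\tfrac34$; at this critical point the length simplifies to $\tfrac{1}{2c^\ast}=t-\sqrt{t^2-\tfrac12}$, the second regime, while for $t>\tfrac34$ the maximum is attained at the boundary $\psi=0$ (the line perpendicular to a diagonal, cutting the vertex symmetrically), giving $2-2t$, the third regime. These are exactly the three cases of \eqref{eq:max-1-dim}, with the breakpoints $t=\tfrac{1}{\sqrt2}$ (the inradius, where opposite-edge chords cease to exist) and $t=\tfrac34$ (where the interior critical angle exits the feasible range) emerging naturally. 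Finally, since each of these optimal chords is realised inside the square $\{|x|+|y|\le1\}$, they are attained by lines contained in a coordinate $2$-plane $\mathrm{span}\{e_i,e_j\}$, which establishes the asserted attainment.
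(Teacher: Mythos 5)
Your planar analysis of the square $\{|x|+|y|\le 1\}$ is correct and complete: the opposite-edge regime giving $\frac{2}{t+\sqrt{1-t^2}}$, the corner-cutting length $\frac{2(\cos\psi-t)}{\cos 2\psi}$, the critical value $c^\ast=t+\sqrt{t^2-\tfrac12}$ leaving the feasible range exactly at $t=\tfrac34$, and the resulting three regimes all check out. The trouble is that this planar computation is not where the difficulty of Theorem \ref{thm:max-1-dim} lies (the paper simply quotes it from Theorem 1 of \cite{zva}); the content of the theorem is the reduction from $\R^n$ to a coordinate $2$-plane, and that reduction is precisely what your proposal leaves open. You say so yourself (``where the real work lies''), offering only a choice between an unspecified perturbation/exchange argument and ``projection-type reasoning''; that is an acknowledged gap, not a proof.

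Moreover, both ingredients you sketch for the reduction fail as stated. First, the amalgamation step is incorrect: if $a_i$ and $b_i$ have a common sign for all $i$ in a block $I$ and you replace these coordinates by the single coordinate $\sum_{i\in I}(m_i+s\theta_i)$, then the $\ell_1$-norm along the chord is indeed unchanged, but $\sum_{i\in I}\theta_i^2$ and $\sum_{i\in I}m_i^2$ are not (e.g.\ $(\theta_i)_{i\in I}=(\tfrac{1}{\sqrt2},\tfrac{1}{\sqrt2})$ amalgamates to $\sqrt2$), so the merged configuration has a different normalisation: the Euclidean chord length and the distance of the line to the origin both change, i.e.\ exactly ``the constraints governing the length and the distance $t$'' are \emph{not} preserved. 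Second, projections cannot handle the sign-changing coordinates in the \emph{maximum} problem: an orthogonal projection decreases the chord length but also decreases the distance to the origin, and since the right-hand side of \eqref{eq:max-1-dim} is non-increasing in $t$, a bound valid at the smaller distance $t'\le t$ is weaker than the bound needed at $t$ --- this is why the paper uses projections only for the minimal sections (Theorem \ref{thm:mim-1-dim}). The paper's actual route is different: by Lemma 2 of \cite{zva}, a length-maximising line must pass through two \emph{edges} of $B_1^n$, and since every edge has the form $[\pm e_i,\pm e_j]$, the line is automatically confined to at most four coordinates; the substance of the proof is then the case analysis over pairs of edges (sharing a vertex or not), where Lemma \ref{lm:small-dist} forces $t\le\tfrac{1}{\sqrt2}$ in the disjoint cases and the algebraic inequality \eqref{eq:goal-case22} shows that such configurations never beat the coordinate-plane sections. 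Some argument of this kind --- controlling chords spread over three or four coordinates --- is indispensable, and your proposal contains none.
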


The minimal length noncentral sections by lines are described in the next result. We point out that for the cube, the minimal sections are attained at $2$ dimensional sections and for the maximal sections the answer breaks into $n$ cases according to the value of $t$ (see Theorems 2 and 4 in \cite{zva}), whereas for the cross-polytope -- the other way around.

\begin{theorem}\label{thm:mim-1-dim}
For every $t \in [0,1]$, let $\mathscr{L}_t$ be the set of lines in $\R^n$ which are at distance $t$ away from the origin. Let $T_n(k) = \frac{\sqrt{(k+1)(n-k)}+\sqrt{k(n-k-1)}}{n(\sqrt{k}+\sqrt{k+1})}$, $k = 0, 1,\dots,n-1$. Then $\frac{1}{\sqrt{n}} = T_n(0) > T_n(1) > T_n(2) > \dots > T_n(n-1)$ and for $n \geq 2$, we have 
\begin{equation}\label{eq:min-1-dim}
\min_{\ell \in \mathscr{L}_t} |B_1^n \cap \ell| = \begin{cases} \frac{2}{\sqrt{n}}, & t \in [0,T_n(n-1)], \\
2\frac{1-t\sqrt{n-k}}{\sqrt{k}}, & t \in [T_n(k),T_n(k-1)], \ k = n-1,\dots, 1, \\
0, & t \in (\frac{1}{\sqrt{n}},1]. \end{cases}
\end{equation}
\end{theorem}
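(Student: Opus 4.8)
The plan is to reduce the problem of minimizing the length of a chord $B_1^n\cap\ell$ over all lines $\ell\in\mathscr{L}_t$ to an explicit optimization. Write $\ell = \{p + s\theta : s\in\R\}$, where $\theta\in S^{n-1}$ is a unit direction and $p\perp\theta$ is the foot of the perpendicular from the origin, so $|p|=t$. The chord length is the length of the interval $\{s : p+s\theta\in B_1^n\}$, i.e. the length of the slice of the $\ell_1$-ball by the line through $p$ in direction $\theta$. I would first record that $B_1^n$ is a polytope, so along the line the function $s\mapsto \|p+s\theta\|_1$ is piecewise linear and convex; the chord is the set where it is $\le 1$. A natural reduction is to note that for the minimum we may take the line to pass through a facet of $B_1^n$ (the minimum cannot occur strictly inside), and to project onto the supporting hyperplane of that facet. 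The facets of $B_1^n$ are the simplices $\{x : \sum_{i} \varepsilon_i x_i = 1,\ \operatorname{sign}(x_i)=\varepsilon_i\}$ for sign patterns $\varepsilon\in\{-1,1\}^n$, each lying in a hyperplane at distance $1/\sqrt n$ from the origin.

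Next I would explain the appearance of the parameter $k$. The key geometric fact is that the \emph{shortest} chords are the ones that use as few vertices of $B_1^n$ as possible, and the quantities $T_n(k)$ mark the thresholds of $t$ at which the optimal line transitions from piercing a $k$-dimensional face structure to a $(k-1)$-dimensional one. Concretely, I expect that for $t$ small the minimal chord is realized by a line parallel to a long diagonal, giving the constant value $\tfrac{2}{\sqrt n}$ (the length of the main diagonal of $B_1^n$ through antipodal vertices is $2\cdot\tfrac{1}{\sqrt n}\cdot\sqrt n$... more carefully, the chord through two opposite facets). As $t$ grows past $T_n(n-1)$, the optimal line starts cutting across fewer coordinates, and the candidate chords are the segments joining a point on a $k$-dimensional coordinate flat of the boundary to another, yielding the expression $2\frac{1-t\sqrt{n-k}}{\sqrt k}$. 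I would verify this formula by writing the extremal configuration explicitly: place $p$ so that its nonzero coordinates are spread over $n-k$ of the axes (contributing the $\sqrt{n-k}$) with the chord running in the remaining $k$ coordinate directions (contributing the $\sqrt k$ and the $1-t\sqrt{n-k}$ from the residual $\ell_1$-budget after accounting for $p$).

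The monotonicity chain $T_n(0)>T_n(1)>\dots>T_n(n-1)$ is a self-contained one-variable inequality that I would dispatch first, since it guarantees that the case intervals in \eqref{eq:min-1-dim} are nonempty and nonoverlapping and partition $[0,\tfrac{1}{\sqrt n}]$. This reduces to showing $T_n(k)>T_n(k+1)$, i.e. an inequality between sums of square roots; squaring and clearing denominators should make it a polynomial inequality in $k$ and $n$ that holds termwise, and I would present it as a short lemma. For the final branch, the value $0$ for $t>\tfrac{1}{\sqrt n}$ is immediate: if $t$ exceeds the inradius $\tfrac{1}{\sqrt n}$ of $B_1^n$ then lines can avoid the body entirely (a line tangent to or missing the largest inscribed ball), so the infimum of chord lengths is $0$.

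The main obstacle I anticipate is the lower bound, i.e. proving that no line at distance $t$ gives a shorter chord than the claimed value. The direct optimization over $(\theta,p)$ is a constrained problem on $S^{n-1}\times\{|p|=t, p\perp\theta\}$, and the clean way to handle it is a projection/case-analysis argument in the spirit of \cite{zva}: fix the direction $\theta$, and for that direction find the minimizing foot $p$ explicitly (a convex, essentially one-dimensional problem once $\theta$ is fixed, since the chord length depends on $p$ only through how it interacts with the facet hyperplanes), then minimize over $\theta$. The combinatorial delicacy is that the optimal $\theta$ will have a support of a specific size $k$ determined by $t$, and one must argue that among all support sizes the formula $2\frac{1-t\sqrt{n-k}}{\sqrt k}$ is minimized at exactly the $k$ dictated by the interval containing $t$. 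I would organize this as: (i) for each fixed support pattern compute the minimal chord in closed form; (ii) optimize the resulting one-variable expression over the continuous parameters; (iii) compare across $k$ using the $T_n(k)$ thresholds, where the equality of neighboring branch formulas at $t=T_n(k)$ provides the consistency check that pins down the correct $k$ on each interval.
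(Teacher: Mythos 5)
Your plan handles the peripheral parts of the theorem correctly: the zero branch for $t>\frac{1}{\sqrt n}$ (though your parenthetical justification is off --- a line missing the inscribed ball can perfectly well still meet $B_1^n$ near a vertex; the clean construction is a line lying in a hyperplane parallel to a facet hyperplane and disjoint from the body), the monotonicity of the $T_n(k)$, the explicit extremal segments giving the upper bound, and the cross-$k$ comparison via the thresholds $T_n(k)$, which is exactly how the paper concludes. But for the heart of the matter --- the lower bound --- your proposal has a genuine gap. Your strategy is to fix the direction $\theta$ and minimize the chord length over the foot $p$, which you describe as ``a convex, essentially one-dimensional problem once $\theta$ is fixed.'' It is neither: $p$ ranges over the sphere of radius $t$ in the hyperplane $\theta^{\perp}$, an $(n-2)$-dimensional set, and by Brunn's principle the chord length is a \emph{concave} function of the translation on its support, so you would be minimizing a concave function over a sphere, with no convexity to exploit and no closed form to expect. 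Likewise your step (i), ``for each fixed support pattern compute the minimal chord in closed form,'' is itself an $n$-dimensional optimization essentially as hard as the original problem; nothing in the proposal reduces its dimension.

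The missing idea is a dimension-collapsing mechanism. The paper first classifies a minimizing chord $[a,b]$ by the pair of \emph{distinct facets} containing its endpoints (by symmetry $a\in F_0$ and $b\in F_k$, where $F_0$ is the all-positive facet and $F_k$ the facet with the first $k$ signs flipped); the case $k=n$ gives $|a-b|\geq \frac{2}{\sqrt n}$ at once since those facets are parallel. For $k\leq n-1$ it applies the orthogonal projection $Q_k$ onto the $2$-plane spanned by $u_k=\frac{1}{k}\sum_{i\leq k}e_i$ and $v_k=\frac{1}{n-k}\sum_{i>k}e_i$: the image of $B_1^n$ is the planar diamond $\conv\{\pm u_k,\pm v_k\}$, the facets $F_0,F_k$ map onto two of its edges, the projection can only shorten the chord and decrease the distance to the origin, and Brunn's principle (evenness and concavity of the parallel-section length) upgrades ``the projected line is at distance $t'\leq t$'' into the monotonicity needed to compare with the planar minimum at distance exactly $t$. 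This reduces everything to one explicit two-dimensional problem --- the minimal segment at distance $t$ from the center of an isosceles triangle with endpoints on its legs (the paper's Lemma~\ref{lm:isosceles}) --- whose answer is $2\frac{1-t\sqrt{n-k}}{\sqrt k}$. Without this facet classification and projection step, or some substitute for them, your steps (i) and (ii) have no workable starting point, so the proposal as it stands does not prove the lower bound.
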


\begin{remark}
It will be clear from the proof that for $t \in [0,T_n(n-1)]$, the minimum in \eqref{eq:min-1-dim} is attained by a line passing through two parallel facets of $B_1^n$ perpendicular to them and for $t \in [T_n(k),T_n(k-1)]$, the minimum is attained by a line connecting points $(1-t\sqrt{n-k})\frac{\sum_{i=1}^k e_i}{k} + t\sqrt{n-k}\frac{\sum_{i=k+1}^n e_i}{n-k}$ and $-(1-t\sqrt{n-k})\frac{\sum_{i=1}^k e_i}{k} + t\sqrt{n-k}\frac{\sum_{i=k+1}^n e_i}{n-k}$. For $t \in (\frac{1}{\sqrt{n}},1]$, the minimum is plainly attained by lines disjoint from $B_1^n$. As previously, the length of the minimal section given by \eqref{eq:min-1-dim} as a function of $t$, the distance of lines to the origin, is discontinuous at one point, namely $t = \frac{1}{\sqrt{n}}$. 
\end{remark}

We are also able to identify maximal hyperplane sections by hyperplanes which are at a fixed large distance from the origin, large meaning here at least $\frac{1}{\sqrt{2}}$ (this forces the hyperplanes to separate exactly one vertex).

\begin{theorem}\label{thm:max-hyp}
For every $t \in [0,1]$, let $\mathscr{H}_t$ be the set of hyperplanes in $\R^n$ which are at distance $t$ away from the origin. For $n \geq 3$ and $t \in (\frac{1}{\sqrt{2}},1]$, we have
\begin{equation}\label{eq:max-hyp}
\max_{H \in \mathscr{H}_t} |B_1^n \cap H| = \frac{2^{n-1}(1-t)^{n-1}}{(n-1)!}.
\end{equation}
The maximum is attained if and only if $H$ is parallel to one of the coordinate hyperplanes $\{x \in \R^n, \ x_i = 0\}$.
\end{theorem}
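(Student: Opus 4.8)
The plan is to reduce the problem to showing that among all hyperplanes at distance $t > \frac{1}{\sqrt 2}$, the sections cutting off a single vertex are maximized by coordinate-parallel hyperplanes. The key geometric observation to establish first is that when $t > \frac{1}{\sqrt 2}$, any hyperplane $H \in \mathscr{H}_t$ can intersect $B_1^n$ only near a single vertex: since the distance from the origin to the hyperplane through any two vertices $\pm e_i, \pm e_j$ (or $e_i, e_j$) of $B_1^n$ is at most $\frac{1}{\sqrt 2}$, a hyperplane strictly farther than $\frac{1}{\sqrt 2}$ from the origin must separate exactly one vertex from the rest. Thus $B_1^n \cap H$ is a section of the cone (simplex corner) $C$ with apex at that vertex, say $e_n$, spanned by the $n-1$ edges emanating from $e_n$ toward the neighbouring vertices $\pm e_i$, $i < n$.

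First I would fix the separated vertex to be $e_n = (0,\dots,0,1)$ and write $H = \{x : \scal{a}{x} = s\}$ with $|a| = 1$ and $s = t$ (taking the unit normal pointing toward $e_n$). The intersection $B_1^n \cap H$ then equals the intersection of $H$ with the corner simplex at $e_n$, which is an $(n-1)$-dimensional simplex whose $n-1$ vertices lie on the edges $[e_n, \pm e_i]$. I would parametrize these intersection points explicitly: the edge from $e_n$ to a neighbour $v$ meets $H$ at the point $e_n + \lambda_v(v - e_n)$ where $\lambda_v = \frac{t - a_n}{\scal{a}{v} - a_n}$, valid precisely when this lies in $[0,1]$. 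The volume of the simplex cross-section is then a standard determinant/product formula in the $\lambda_v$, after accounting for the $\frac{1}{|a_n|}$ or appropriate Jacobian factor coming from the slice of the cone. This converts the optimization over $H$ into an explicit optimization over the unit normal vector $a = (a_1,\dots,a_n)$.

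The main computation is then to maximize this volume expression over $a \in S^{n-1}$ with the constraint $\scal{a}{e_n} = a_n$ large enough (so that $H$ separates $e_n$). I expect the clean way is to show that the volume factors as a constant times $\prod \lambda_{v}$-type quantity divided by $a_n$, and then to argue by a symmetrization or coordinatewise convexity argument that the optimum is attained when all the off-diagonal coordinates $a_1,\dots,a_{n-1}$ are equal, or — more strongly — when they all vanish, i.e. $a = e_n$, giving a coordinate-parallel hyperplane. Concretely I would compute the volume for $a = e_n$, where the cross-section is a regular $(n-1)$-simplex scaled by $(1-t)$, yielding $\frac{(2(1-t))^{n-1}}{(n-1)!} \cdot \frac{1}{\text{(normalization)}}$ matching the claimed value, and then verify this is the maximum.

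The hard part will be the final optimization step: proving rigorously that perturbing $a$ away from $e_n$ (while keeping $|a| = 1$ and $H$ at distance $t$ separating $e_n$) strictly decreases the volume. I anticipate this requires either a Lagrange-multiplier analysis showing $a = e_n$ is the unique critical point with the right second-order behavior, or a more robust convexity/rearrangement argument exploiting the symmetry of the corner simplex under permutations and sign changes of the first $n-1$ coordinates. A likely clean route is to fix $a_n = \cos\theta$ and show the remaining volume is Schur-concave in $(a_1^2,\dots,a_{n-1}^2)$, so that it is maximized when these are all equal; combined with monotonicity in $a_n$ pushing toward $a_n = 1$, this forces $a = e_n$ and simultaneously yields the claimed uniqueness of the extremizer. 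Establishing this Schur-concavity, together with carefully verifying that the constraint $t > \frac{1}{\sqrt 2}$ is exactly what guarantees the single-vertex-separation regime and the validity of all $\lambda_v \in [0,1]$, is where the bulk of the technical work will lie.
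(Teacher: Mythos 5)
There is a genuine geometric error at the heart of your plan: the section $B_1^n \cap H$ is \emph{not} a simplex. The separated vertex (say $e_1$, in the paper's labelling) has $2n-2$ neighbouring vertices, namely $\pm e_k$, $k \geq 2$, so $2n-2$ edges emanate from it, and a hyperplane at distance $t > \frac{1}{\sqrt{2}}$ separating that vertex meets all $2n-2$ of these edges. Hence $B_1^n \cap H$ is an $(n-1)$-dimensional polytope with $2n-2$ vertices (combinatorially an $(n-1)$-dimensional cross-polytope), and the chopped-off piece is a cone over it with apex at the vertex --- not a ``corner simplex''. Already for $n=3$, cutting a vertex off the octahedron gives a quadrilateral, not a triangle. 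This invalidates your ``standard determinant/product formula'': no single determinant computes this volume. Your own check at the conjectured extremizer exhibits the confusion: when the normal is a coordinate vector, the section is $(1-t)B_1^{n-1}$, a scaled cross-polytope of volume $\frac{2^{n-1}(1-t)^{n-1}}{(n-1)!}$, not a regular simplex; you land on the right number only because you inserted the cross-polytope constant by hand. The paper's proof repairs exactly this point: it decomposes the chopped-off cone into $2^{n-2}$ simplices $S_{\bar\varepsilon}$, each containing the segment joining the two intersection points on the edges $[e_1,\pm e_n]$, computes each volume by a determinant, sums over the sign patterns $\bar\varepsilon$, and only then obtains the closed form $|B_1^n \cap H| = \frac{2^{n-1}}{(n-1)!}\,\frac{a_1^{n-2}(a_1-t)^{n-1}}{\prod_{i=2}^n (a_1^2-a_i^2)}$.

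Your proposed endgame also points in the wrong direction. Writing $a_1$ for the coordinate of the normal at the separated vertex (your $a_n$), maximizing the volume for fixed $a_1$ means \emph{minimizing} $\prod_{i\geq 2}(a_1^2-a_i^2)$ subject to $\sum_{i \geq 2} a_i^2 = 1-a_1^2$; since $\sum_{i\geq 2} \log(a_1^2-s_i)$ is concave and symmetric in the variables $s_i=a_i^2$, this product is Schur-concave, so the volume is Schur-\emph{convex} in $(a_2^2,\dots,a_n^2)$: for fixed $a_1$ it is largest when the remaining mass $1-a_1^2$ is concentrated in a single coordinate, not when the coordinates are equal. Symmetrization towards equal coordinates therefore cannot close the argument; the real competition is between $a=e_1$ and these concentrated normals, and one genuinely needs an explicit inequality such as the paper's \eqref{eq:minH}, $a_1^{n-2}(a_1-t)^{n-1}\leq (1-t)^{n-1}\prod_{i=2}^n(a_1^2-a_i^2)$, which the paper proves via the bound $a_1^2-a_i^2 \geq 2a_1^2-1$ and a one-variable concavity argument (with a separate, sharper argument for $n=3$, where that bound is too lossy). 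Your reduction to the single-vertex regime and your parametrization of the edge intersection points are correct and agree with the paper, but both the volume formula and the optimization step need to be replaced.
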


As a corollary, we can find the minimal volume sections by slabs of large width.

\begin{theorem}\label{thm:min-slab}
For $n \geq 3$, $t \in (\frac{1}{\sqrt{2}},1]$ and every unit vector $a$ in $\R^n$, we have
\begin{equation}\label{eq:min-slab}
|B_1^n \cap \{x \in \R^n, \ |\scal{x}{a}| \leq t\}| \geq \frac{2^n}{n!}(1 - (1-t)^n)
\end{equation}
with equality if and only if the direction of $a$ is along one of the axes.
\end{theorem}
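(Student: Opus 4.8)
The plan is to deduce this statement directly from the maximal-section result, Theorem~\ref{thm:max-hyp}, by combining Cavalieri's principle with the central symmetry $B_1^n = -B_1^n$. First I would rewrite the slab section as $B_1^n$ with its two end caps removed. Setting
\[
V(a,t) := |B_1^n \cap \{x \in \R^n : \scal{x}{a} \geq t\}|,
\]
the hyperplanes $\{\scal{x}{a} = t\}$ and $\{\scal{x}{a} = -t\}$ cut off two caps, and by central symmetry these have equal volume, so
\[
|B_1^n \cap \{x \in \R^n : |\scal{x}{a}| \leq t\}| = |B_1^n| - 2\,V(a,t) = \frac{2^n}{n!} - 2\,V(a,t),
\]
using $|B_1^n| = \frac{2^n}{n!}$. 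Hence minimising the slab volume is exactly equivalent to \emph{maximising} the cap volume $V(a,t)$ over unit vectors $a$, and it suffices to prove $V(a,t) \leq V(e_1,t)$ for every unit vector $a$, with equality only along the axes.

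Next I would slice the cap by the affine hyperplanes orthogonal to $a$. Since $|a|=1$, Cavalieri's principle gives
\[
V(a,t) = \int_t^{\infty} A(a,s)\,\dd s, \qquad A(a,s) := |B_1^n \cap \{x \in \R^n : \scal{x}{a} = s\}|,
\]
where $A(a,s)$ is the $(n-1)$-dimensional volume of the slice; because the support function of $B_1^n$ is the $\ell_\infty$-norm, $A(a,s)=0$ for $s > \|a\|_\infty$, so the effective range is $[t,\|a\|_\infty]$ (and $V(a,t)=0$ when $\|a\|_\infty \leq t$). The crucial observation is that for each $s > \frac{1}{\sqrt 2}$ the hyperplane $\{\scal{x}{a}=s\}$ lies at distance $s$ from the origin, so Theorem~\ref{thm:max-hyp} applies \emph{at every level} and yields $A(a,s) \leq \frac{2^{n-1}(1-s)^{n-1}}{(n-1)!} = A(e_1,s)$ for $s \in (\frac{1}{\sqrt 2},1]$. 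As $t > \frac{1}{\sqrt 2}$, the whole integration range consists of such levels, so, using also $\|a\|_\infty \leq |a| = 1$,
\[
V(a,t) = \int_t^{\|a\|_\infty} A(a,s)\,\dd s \leq \int_t^{\|a\|_\infty} A(e_1,s)\,\dd s \leq \int_t^{1} A(e_1,s)\,\dd s = V(e_1,t).
\]
A short computation with the substitution $u = 1-s$ gives $V(e_1,t) = \frac{2^{n-1}(1-t)^n}{n!}$, whence $|B_1^n| - 2V(e_1,t) = \frac{2^n}{n!}\bigl(1 - (1-t)^n\bigr)$, exactly the asserted bound.

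For the equality case both displayed inequalities must be equalities: the second forces $\|a\|_\infty = 1$, since the integrand $A(e_1,s)$ is strictly positive for $s<1$, and together with $|a|=1$ this means $a = \pm e_j$ for some $j$; the first inequality is then automatic, and conversely the axis directions attain the bound, giving the claimed "if and only if." I do not expect a genuine obstacle here, since Theorem~\ref{thm:max-hyp} has already done the hard analytic work; the only care required is bookkeeping, namely verifying that every slicing level stays above $\frac{1}{\sqrt 2}$ so that the section bound is legitimately applicable, and handling the degenerate range $\|a\|_\infty \leq t$ separately (there the slab contains all of $B_1^n$ and the inequality is strict, consistent with equality only along the axes).
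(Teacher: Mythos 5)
Your proof is correct, and it takes a genuinely different route from the paper's, even though both hinge on Theorem~\ref{thm:max-hyp}. The common part is the opening reduction: by central symmetry the slab volume equals $|B_1^n| - 2V(a,t)$, so one must show the cap volume is maximised exactly in the coordinate directions. From there the paper argues via its formula \eqref{eq:volS}: since $t > \frac{1}{\sqrt 2}$ the hyperplane separates a single vertex, so the cap is a pyramid with apex at that vertex and $|S| = \frac{1}{n}|B_1^n\cap H|\cdot(a_1-t)$; each of the two factors is then maximised uniquely at $a = e_1$ (the first by Theorem~\ref{thm:max-hyp}, the second trivially). You avoid the pyramid structure altogether: you slice the cap by the parallel hyperplanes $\{\scal{x}{a} = s\}$ and apply Theorem~\ref{thm:max-hyp} at \emph{every} level $s$, which is legitimate precisely because $t > \frac{1}{\sqrt 2}$ puts the whole integration range $[t,\|a\|_\infty]$ inside the theorem's hypothesis. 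Your route is slightly longer but more robust: it would work for any cap, not just one that happens to be a cone over its base, and your equality analysis is pleasantly self-contained --- it needs only $\|a\|_\infty = 1$ together with $|a| = 1$ to force $a = \pm e_j$, without invoking the equality case of Theorem~\ref{thm:max-hyp} at all; you also explicitly handle the degenerate case $\|a\|_\infty \leq t$, which the paper passes over silently. The paper's route, in exchange, is a two-line corollary once \eqref{eq:volS} is already in hand from the proof of Theorem~\ref{thm:max-hyp}. One shared caveat: at $t = 1$ the slab contains $B_1^n$ for every unit vector $a$, so equality then holds for all $a$ and the ``only if'' part really pertains to $t < 1$; your argument, like the paper's, implicitly assumes $t < 1$ in the equality discussion.
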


\section{Proofs}\label{sec:proofs}

We now turn to proofs of our results. For the maximal sections by lines, following \cite{zva}, we crucially use the fact that the extreme lines have to pass through edges, which greatly simplifies the problem reducing it to questions in $2$, $3$ and $4$ dimensions (depending on the type of edges involved). For the minimal sections by lines, we use a projection argument inspired by one from \cite{zva} (in the cube case the product structure is exploited, whereas we end up with an explicit $2$-dimensional problem to solve -- see Lemma \ref{lm:isosceles}). For the sections by hyperplanes, we first derive a formula for the volume of a ``chopped-off'' pyramid (by breaking it into simplices) and then solve an optimisation problem in $\R^n$. We will also need the following basic fact which can be checked by a direct computation.

\begin{lemma}\label{lm:dist(l,0)}
Let $\ell$ be a line in $\R^n$ passing through two distinct points $a$ and $b$. Then $\ell$ is at distance $\frac{\sqrt{|a|^2|b|^2-\scal{a}{b}^2}}{|a-b|}$ from the origin.
\end{lemma}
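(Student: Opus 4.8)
The plan is to realize $\ell$ as the image of the affine map $s \mapsto a + s(b-a)$, $s \in \R$, and to compute the distance from the origin as $\min_{s \in \R} |a + s(b-a)|$. Since $a$ and $b$ are distinct we have $|b-a| > 0$, and the squared length $|a + s(b-a)|^2 = |a|^2 + 2s\scal{a}{b-a} + s^2|b-a|^2$ is a strictly convex quadratic in $s$, so its minimum is attained at the unique critical point $s^* = -\scal{a}{b-a}/|b-a|^2$. Substituting this value back gives the minimal squared distance $|a|^2 - \scal{a}{b-a}^2/|b-a|^2$.

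It then remains only to simplify this expression. Writing $\scal{a}{b-a} = \scal{a}{b} - |a|^2$ and placing everything over the common denominator $|b-a|^2$, I would expand the numerator $|a|^2|b-a|^2 - (\scal{a}{b}-|a|^2)^2$; using $|b-a|^2 = |a|^2 - 2\scal{a}{b} + |b|^2$, the terms $\pm|a|^4$ and $\mp 2|a|^2\scal{a}{b}$ cancel and the numerator collapses to $|a|^2|b|^2 - \scal{a}{b}^2$. Taking the square root yields the claimed value $\frac{\sqrt{|a|^2|b|^2-\scal{a}{b}^2}}{|a-b|}$, the quantity under the root being nonnegative by the Cauchy--Schwarz inequality.

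A cleaner alternative, which I would mention to avoid the optimization, is a purely geometric observation: the distance $h$ from the origin to $\ell$ is the height of the triangle with vertices $0$, $a$, $b$ dropped onto the side $[a,b]$, so that $\tfrac12 |a-b| \cdot h$ equals the area of that triangle. Since the squared area of the triangle spanned by $a$ and $b$ equals $\tfrac14\bigl(|a|^2|b|^2 - \scal{a}{b}^2\bigr)$ (the Gram determinant of the pair $a$, $b$), one recovers the same formula at once. There is no genuine obstacle here beyond routine algebra; the only point deserving a word of care is the hypothesis $a \neq b$, which is exactly what guarantees both a well-defined line and a nonzero denominator.
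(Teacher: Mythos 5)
Your proof is correct; the paper itself gives no argument for this lemma, stating only that it ``can be checked by a direct computation,'' and your minimization of $|a+s(b-a)|^2$ (as well as the Gram-determinant/area alternative) is precisely that routine computation, carried out correctly.
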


\subsection{Maximal $1$-dimensional sections: Proof of Theorem \ref{thm:max-1-dim}}

Every edge of $B_1^n$ is a segment of the form $[\pm e_i, \pm e_j]$ for some $i \neq j$ and a choice of signs.
Fix $t \in [0,1]$. By Lemma 2 from \cite{zva}, a line $\ell$ for which the maximum in \eqref{eq:max-1-dim} is attained passes through two edges of $B_1^n$. Say these edges are segments $A = [\pm e_i, \pm e_j]$ and $B = [\pm e_k, \pm e_l]$. We have $4$ possibilities depending on the number of distinct indices among $i, j, k, l$.

\bigskip
\noindent
\emph{Case 1:} $|\{i, j, k, l \}| = 2$. By symmetry we can assume $i = k = 1$ and $j = l =2$, that is the maximal segment $B_1^n \cap \ell$ has endpoints on $A = [\pm e_1, \pm e_2]$, $B = [\pm e_1, \pm e_2]$. This means that the maximal segment $B_1^n \cap \ell$ is contained in the $2$-dimensional cross-polytope $B_1^2\times \{0\}^{n-2}$. Rotating by $\pi/2$ and rescaling by $1/\sqrt{2}$, the problem then is equivalent to finding the maximal length section of the square $[-\frac12,\frac12]^2$ by lines at distance $t$ to the origin. This was done in Theorem 1 from \cite{zva} and the answer is exactly the right hand side of \eqref{eq:max-1-dim}. It remains to argue that in the other cases we do not get any longer sections than that.

\bigskip
\noindent
\emph{Case 2:} $|\{i, j, k, l \}| = 3$. By symmetry we can assume $i = k = 1$, $j = 2$, $l = 3$, that is $A = [\pm e_1, \pm e_2]$, $B = [\pm e_1, \pm e_3]$. By symmetry again, we can in fact assume that $A = [\pm e_1, e_2]$, $B = [\pm e_1, e_3]$. Depending on whether $A$ and $B$ share a vertex or not, we have two possibilities.

\bigskip
\noindent
\emph{Subcase 2.1:} $A = [e_1, e_2]$, $B = [e_1, e_3]$. Let $a$, $b$ be the endpoints of the segment $B_1^n \cap \ell$, say $\{a\} = A \cap \ell$ and $\{b\} = B \cap \ell$. Note that $|a-b| \leq \sqrt{2}$ (in fact $|a-b| \leq \max\{|a-e_1|,|b-e_1|\}$ as quickly follows from the sine rule in the triangle $\conv\{a,b,e_1\}$). Since $\ell$ is contained in the plane passing through $e_1, e_2, e_3$ and this plane is at distance $\frac{1}{\sqrt{3}}$ away from the origin, in this case we have $t \geq \frac{1}{\sqrt{3}}$. 

When $t \in [\frac{1}{\sqrt{3}},\frac{1}{\sqrt{2}}]$, the centred sphere of radius $t$, call it $S$, does not touch the edges of $B_1^n$ and we can find a segment with endpoints on $[e_1,e_2]$ and $[-e_1,-e_2]$ which is parallel to the edge $[e_1,-e_2]$ and is tangent to the sphere $S$; its length is $\sqrt{2}$ and the line passing through it, call it $\ell'$, is $t$ away from the origin. Then $|B_1^n \cap \ell'| = \sqrt{2} \geq |a-b| = |B_1^n \cap \ell|$, so the section $B_1^n \cap \ell$ is no longer than the section $B_1^n \cap \ell'$ (from Case 1). 

\begin{figure}[htb]
\begin{center}
\includegraphics[scale=1]{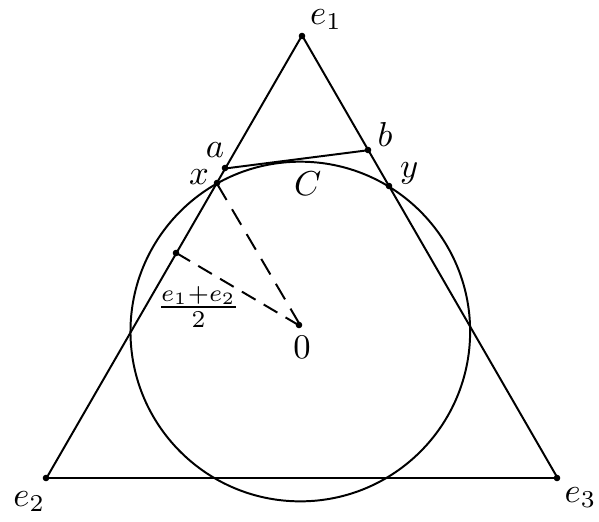}
\end{center}
\caption{Subcase 2.1 when $t > \frac{1}{\sqrt{2}}$.}\label{fig:subcase}
\end{figure}

Suppose $t \in (\frac{1}{\sqrt{2}},1]$. Then the intersection of the sphere $S$ and the $2$-dimensional face $\conv\{e_1,e_2,e_3\}$ consists of $3$ disjoint arcs. Let $C$ be the arc among those $3$ that is intersecting the edges $[e_1,e_2]$ and $[e_1,e_3]$, say at points $x$ and $y$ respectively. Since the segment $[a,b]$ is contained in that face, it is tangent to $C$ and thus $\max\{|a-e_1|, |b-e_1|\} \leq |x-e_1|$ (see Figure \ref{fig:subcase}). By Pythagoras' theorem for the triangle $\conv\{0, \frac{e_1+e_2}{2},x\}$, we have $\left(\frac{\sqrt{2}}{2} - |x-e_1|\right)^2 + \frac{1}{2} = t^2$, thus
\[
|a-b| \leq \max\{|a-e_1|, |b-e_1|\} \leq |x-e_1| = \frac{1}{\sqrt{2}} - \sqrt{t^2-\frac{1}{2}}.
\]
For $t \in (\frac{1}{\sqrt{2}},\frac{3}{4}]$, we have $\frac{1}{\sqrt{2}} - \sqrt{t^2-\frac{1}{2}} \leq t - \sqrt{t^2-\frac{1}{2}}$ and for $t \in [\frac{3}{4},1]$, we check that $ \frac{1}{\sqrt{2}} - \sqrt{t^2-\frac{1}{2}} \leq 2-2t$ (the left hand side is convex as a function of $t$ whereas the right hand side is linear, so it is enough to verify the inequality at the endpoints $t=\frac{3}{4}$ and $t = 1$). Therefore, $|a-b| = |B_1^n \cap \ell|$ is bounded by the right hand side of \eqref{eq:max-1-dim}, as desired.

\bigskip
\noindent
\emph{Subcase 2.2:} $A = [e_1, e_2]$, $B = [-e_1, e_3]$. Let $a \in A$ and $b \in B$ be the endpoints of the segment $B_1^n \cap \ell$, say $a = (1-\alpha)e_1 + \alpha e_2$ and $b = -(1-\beta)e_1 + \beta e_3$ for some $\alpha, \beta \in [0,1]$. Note that $\scal{a}{b} \leq 0$ and $|a|, |b| \leq 1$. 

\begin{lemma}\label{lm:small-dist}
Suppose $x$ and $y$ are distinct vectors in $\R^n$ such that $\scal{x}{y} \leq 0$ and each one is of length at most $1$. Then
\[
\frac{|x|^2|y|^2 - \scal{x}{y}^2}{|x-y|^2} \leq \frac{1}{2},
\]
that is, in view of Lemma \ref{lm:dist(l,0)}, the line passing through $x$ and $y$ is at most $\frac{1}{\sqrt{2}}$ away from the origin.
\end{lemma}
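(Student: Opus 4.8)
The plan is to reduce everything to a one-variable polynomial inequality. Write $u = |x|$, $v = |y|$ and $s = \scal{x}{y}$, so that the hypotheses become $u, v \in [0,1]$ and $s \leq 0$. Then $|x-y|^2 = u^2 + v^2 - 2s$, which is strictly positive because $x \neq y$, while the numerator equals $|x|^2|y|^2 - \scal{x}{y}^2 = u^2v^2 - s^2$. Hence, after clearing the positive denominator, the claimed bound is equivalent to
\[
2(u^2v^2 - s^2) \leq u^2 + v^2 - 2s.
\]

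First I would record the elementary consequences of the hypotheses. By Cauchy--Schwarz, $s^2 \leq u^2v^2$, so the numerator is nonnegative (reassuring, though not strictly needed below). The two observations that do the work are: since $s \leq 0$ we have $-2s \geq 0$; and since $u, v \leq 1$ we have both $u^2v^2 \leq u^2$ and $u^2v^2 \leq v^2$, whence $2u^2v^2 \leq u^2 + v^2$. Stringing these together, I would verify the target inequality directly:
\[
u^2 + v^2 - 2s \geq u^2 + v^2 \geq 2u^2v^2 \geq 2u^2v^2 - 2s^2 = 2(u^2v^2 - s^2),
\]
where the first step uses $-2s \geq 0$, the second uses $2u^2v^2 \leq u^2 + v^2$, and the third uses $2s^2 \geq 0$. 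This is exactly the required estimate, and Lemma \ref{lm:dist(l,0)} then translates it into the stated bound on the distance of the line through $x$ and $y$.

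The computation is short, so the real ``obstacle'' is organizational: one must notice that the two hypotheses $s \leq 0$ and $u, v \leq 1$ can be applied separately and additively, rather than attempting to optimize the ratio jointly over $x$ and $y$ in $\R^n$. Should one prefer monotonicity language, an equivalent route is to fix $u, v$ and regard $f(s) = 2s^2 - 2s + (u^2 + v^2 - 2u^2v^2)$ as a quadratic in $s$; since $f'(s) = 4s - 2 < 0$ on $s \leq 0$, the minimum of $f$ over the admissible range $s \in [-uv, 0]$ (the lower end coming from Cauchy--Schwarz) is attained at $s = 0$, where $f(0) = u^2 + v^2 - 2u^2v^2 \geq 0$. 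Either argument closes the proof, but the direct chain of inequalities above is the cleanest and avoids any casework.
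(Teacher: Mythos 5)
Your proof is correct and follows essentially the same route as the paper: clear the (positive) denominator, drop the $-2\scal{x}{y}$ and $\scal{x}{y}^2$ terms using $\scal{x}{y}\leq 0$, and control the $|x|^2|y|^2$ term using $|x|,|y|\leq 1$. The only cosmetic difference is the last step, where you use $2u^2v^2 \leq u^2+v^2$ via $u^2v^2\leq\min(u^2,v^2)$, while the paper uses $u^2v^2\leq uv$ together with $u^2+v^2-2uv=(u-v)^2\geq 0$.
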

\begin{proof}
Estimates $|x|^2|y|^2 \leq |x||y|$, $\scal{x}{y} \leq 0$ and $\scal{x}{y}^2 \geq 0$ yield
\begin{align*}
|x-y|^2 - 2(|x|^2|y|^2 - \scal{x}{y}^2) &= |x|^2 + |y|^2 - 2|x|^2|y|^2 + 2\scal{x}{y}^2 - 2\scal{x}{y} \\
&\geq |x|^2 + |y|^2 - 2|x||y| = (|x|-|y|)^2 \geq 0.
\end{align*}
\end{proof}

\noindent
Lemma \ref{lm:small-dist} gives that $t \leq \frac{1}{\sqrt{2}}$. In view of \eqref{eq:max-1-dim}, it is then enough to show that 
\[
|a-b| = |B_1^n \cap \ell| \leq \frac{2}{t + \sqrt{1-t^2}}.
\] 
Equivalently, plugging in $t = \frac{\sqrt{|a|^2|b|^2 - \scal{a}{b}^2}}{|a-b|}$,
\[
\sqrt{|a|^2|b|^2 - \scal{a}{b}^2} + \sqrt{|a-b|^2 - |a|^2|b|^2 + \scal{a}{b}^2} \leq 2.
\]
Since
\begin{align*}
\sqrt{|a-b|^2 - |a|^2|b|^2 + \scal{a}{b}^2} &= \sqrt{-(1-|a|^2)(1-|b|^2) + (1-\scal{a}{b})^2} \\
&\leq \sqrt{(1-\scal{a}{b})^2} = 1 - \scal{a}{b}
\end{align*}
(in the last equality we used $|\scal{a}{b}| \leq |a||b| \leq 1$), it suffices to show
\begin{equation}\label{eq:goal-case22}
|a|^2|b|^2 - \scal{a}{b}^2 \leq (1 + \scal{a}{b})^2.
\end{equation}
Using coordinates, $a = (1-\alpha)e_1 + \alpha e_2$, $b = -(1-\beta)e_1 + \beta e_3$, we have $|a|^2 = \alpha^2 + (1-\alpha)^2$, $|b|^2 = \beta^2 + (1-\beta)^2$ and $\scal{a}{b} = -(1-\alpha)(1-\beta)$. Thus
\begin{align*}
(1 + \scal{a}{b})^2 - \big(|a|^2|b|^2 - \scal{a}{b}^2\big) &= \big(\alpha + \beta - \alpha\beta\big)^2 - \big(\alpha^2\beta^2 + \alpha^2(1-\beta)^2 + (1-\alpha)^2\beta^2\big) \\
&= \alpha^2 + \beta^2 + \alpha^2\beta^2 + 2\alpha\beta - 2\alpha^2\beta - 2\alpha\beta^2 \\
&\qquad\qquad\qquad\qquad-\big(3\alpha^2\beta^2 + \alpha^2 + \beta^2 - 2\alpha^2\beta - 2\alpha\beta^2 \big) \\
&= 2\alpha\beta - 2\alpha^2\beta^2 = 2\alpha\beta(1-\alpha\beta) \geq 0.
\end{align*}

\bigskip
\noindent
\emph{Case 3:} $|\{i, j, k, l \}| = 4$. By symmetry we can assume that $i = 1$, $j = 2$, $k = 3$, $l = 4$ and $A = [e_1, e_2]$, $B = [e_3, e_4]$. Let $a \in A$ and $b \in B$ be the endpoints of the segment $B_1^n \cap \ell$, say $a = (1-\alpha)e_1 + \alpha e_2$ and $b = (1-\beta)e_3 + \beta e_4$ for some $\alpha, \beta \in [0,1]$. Note that $\scal{a}{b} = 0$ and $|a|, |b| \leq 1$. Therefore, we can repeat verbatim the argument from Subcase 2.2 up to \eqref{eq:goal-case22}. Moreover, in this case, \eqref{eq:goal-case22} becomes $|a|^2|b|^2 \leq 1$, so it clearly holds. This finishes the proof.

\begin{remark}
The exact description of lines attaining maximum in \eqref{eq:max-1-dim} contained in a $2$-dimensional coordinate subspace can be found in the proof of Theorem 1 in \cite{zva}.
\end{remark}

\subsection{Minimal $1$-dimensional sections: Proof of Theorem \ref{thm:mim-1-dim}}

\subsubsection*{Case $t > \frac{1}{\sqrt{n}}$}

Plainly, there are lines $t$ away from the origin disjoint from $B_1^n$ (e.g. those parallel to facets)
%, so $\min_{\ell \in \mathscr{L}_t} |B_1^n \cap \ell| = 0$ 
and this explains the last case of \eqref{eq:min-1-dim}. 

\subsubsection*{Case $t \in [0,\frac{1}{\sqrt{n}}]$}

Let $\ell \in \mathscr{L}_t$ be a line at distance $t$ from the origin. The intersection $B_1^n\cap \ell$ is a segment, say $[a,b]$. When $t < \frac{1}{\sqrt{n}}$, the segment $[a,b]$ is not entirely contained in any of the facets of $B_1^n$ (otherwise its distance to the origin would be at least $\frac{1}{\sqrt{n}}$). We can thus assume that the endpoints $a$ and $b$ belong to two \emph{distinct} facets of $B_1^n$ (and not their intersection). By symmetry, we can assume that $a$ belongs to the facet $F_0 = \{x \in \R^n, \ x_1,\dots,x_n \geq 0, \sum_{i=1}^n |x_i| = 1\}$ and $b$ belongs to the facet $F_k = \{x \in \R^n, \ x_1,\dots,x_k \leq 0, x_{k+1},\dots, x_n \geq 0 , \sum_{i=1}^n |x_i| = 1\}$ for some $k \in \{1,\dots,n\}$ and $a \notin F_k$ and $b \notin F_0$. When $t = \frac{1}{\sqrt{n}}$, the segment $[a,b]$ may be entirely contained in one of the facets of $B_1^n$, say $F$ (if not, we proceed as earlier). Then the segment contains the centroid of $F$ (otherwise its distance to the origin would be larger than $\frac{1}{\sqrt{n}}$). The endpoints $a$, $b$ are thus contained in two distinct facets of $F$, say $F' \cap F$ and $F'' \cap F$ for some two other \emph{distinct} facets $F'$, $F''$ of $B_1^n$. Consequently, as earlier, we can say that $a \in F_0 \setminus F_k$ and $b \in F_k \setminus F_0$.

\bigskip
\noindent
\emph{Case 1:} $k = n$. The facets $F_0$ and $F_n$ are parallel at distance $\frac{2}{\sqrt{n}}$, hence 
\[
|B_1^n \cap \ell| = |a-b| \geq \frac{2}{\sqrt{n}}.\]

\bigskip
\noindent
\emph{Case 2:} $k \leq n-1$. Let $Q_k\colon \R^n\to\R^n$ be the orthogonal projection onto the $2$-dimensional subspace $G_k$ spanned by $u_k = \frac{\sum_{i=1}^k e_i}{k}$ and $v_k = \frac{\sum_{i=k+1}^n e_{i}}{n-k}$, that is 
\[
Q_k(x_1,\dots,x_n) = \left(\underbrace{\frac{\sum_{i=1}^k x_i}{k},\dots,\frac{\sum_{i=1}^k x_i}{k}}_{k},\underbrace{\frac{\sum_{i=k+1}^n x_i}{n-k},\dots,\frac{\sum_{i=k+1}^n x_i}{n-k}}_{n-k}\right).
\]
Note that the image of the cross-polytope is a diamond,
\[
Q_k(B_1^n) = Q_k(\conv\{\pm e_1,\dots,\pm e_n\}) = \conv\{\pm Q_ke_1,\dots, \pm Q_ke_n\} = \conv\{\pm u_k, \pm v_k\}.
\]
Call this diamond $D_k$.
Moreover, the facets $F_0$ and $F_k$ are mapped onto its two edges, $Q_k(F_0) = [u_k,v_k]$ and $Q_k(F_k) = [-u_k,v_k]$. Let $a' = Q_ka$, $b' = Q_kb$. Since $a \in F_0 \setminus F_k$ and $b \in F_k \setminus F_0$, we have that $a' \in [u_k,v_k)$ and $b' \in [-u_k,v_k)$. Given $t \in [0,\frac{1}{\sqrt{n}}]$, let 
\begin{align*}
m_k(t) = &\text{minimum length of sections of $D_k$ by lines which are $t$ away from the origin}\\
&\text{and intersect the edges $[u_k,v_k)$ and $[-u_k,v_k)$ (avoiding the vertex $v_k$).}
\end{align*}
Say the line passing through $a'$ and $b'$ is $t'$ away from the origin. Because it is the image of $\ell$ under the projection $Q_k$, we have $t' \leq t$. We thus get
\[
|B_1^n \cap \ell| = |a-b| \geq |a'-b'| \geq m_k(t') \geq m_k(t),
\]
where the last inequality follows from the fact that the length of sections of $D_k$ by parallel lines as a function of distance to the origin is even and concave on its support (Brunn's principle), hence nonincreasing. 

Combining Case 1 and 2 yields
\begin{equation}\label{eq:min-low-bd}
\min_{\ell \in \mathscr{L}_t} |B_1^n \cap \ell| \geq m(t)
\end{equation}
with
\[
m(t) = \min\left\{\frac{2}{\sqrt{n}}, \min_{1 \leq k \leq n-1} m_k(t) \right\}.
\]
We shall now find the minimum on the right hand side and then show that equality in \eqref{eq:min-low-bd} is in fact attained. Finding the function $m_k(t)$ boils down to solving a $2$-dimensional problem. For $k = 1,\dots,n-1$ and $t \in [0,\frac{1}{\sqrt{n}}]$, we have
\begin{equation}\label{eq:m_k(t)}
m_k(t) = 2\frac{1-t\sqrt{n-k}}{\sqrt{k}}
\end{equation}
(we defer its proof). Defining $m_n(t) = \frac{2}{\sqrt{n}}$, we can write $m(t) = \min_{1 \leq k \leq n} m_n(t)$. Note that $m_{k+1}(t) \geq m_k(t)$ if and only if $t \geq T_n(k)$ with $T_n(k) = \frac{\sqrt{(k+1)(n-k)}+\sqrt{k(n-k-1)}}{n(\sqrt{k}+\sqrt{k+1})}$. We check that
\[
0 < T_n(n-1) < T_n(n-2) < \dots < T_n(1) < T_n(0) = \frac{1}{\sqrt{n}}.
\]
As a result, for $t \in [0, T_n(n-1)]$, we have $m_1(t) \geq m_2(t) \geq \dots \geq m_n(t)$. For $t \in (T_n(k),T_n(k-1)]$ with $k = n-1,\dots,1$, we have $m_1(t) \geq \dots \geq m_{k-1}(t) \geq m_k(t) \leq m_{k+1}(t) \leq \dots \leq m_n(t)$, thus $m(t)$ is exactly the right hand side of \eqref{eq:min-1-dim}. 

It remains to show that this bound is attained, that is given $t$, there is a line $\ell$ which is $t$ away from the origin such that $|B_1^n \cap \ell| = m(t)$. To this end, first fix $t \in [0, T_n(n-1)]$. Let $\theta = t\sqrt{n(n-1)}$ (note $\theta \in [0,1)$), take $a = \theta\frac{\sum_{i=1}^{n-1} e_i}{n-1} + (1-\theta)\frac{\sum_{i=1}^n e_i}{n}$ and $b = a - \frac{2}{n}\sum_{i=1}^n e_i$. These are points on the boundary of $B_1^n$ (on the facets $F_0$ and $F_n$) and the line $\ell$ passing through them gives $|B_1^n \cap \ell| = |a-b| = \frac{2}{\sqrt{n}} = m(t)$. We check that $\ell$ is $t$ away from the origin (e.g. using Lemma \ref{lm:dist(l,0)}). Now fix $k \in \{1,\dots,n-1\}$ and $t \in (T_n(k),T_n(k-1)]$. Set $\theta = t\sqrt{n-k}$ (note that $\theta \in (0,1]$), $a = (1-\theta)u_k + \theta v_k$ and $b = -(1-\theta)u_k + \theta v_k$. These are boundary points (belonging to the facets $F_0$ and $F_k$), the line $\ell$ passing through them gives $|B_1^n \cap \ell| = |a-b| = 2(1-\theta)|u_k| = 2\frac{1-t\sqrt{n-k}}{\sqrt{k}} = m_k(t) = m(t)$ and we check that $\ell$ is $t$ away from the origin.

Finally, we are left with showing \eqref{eq:m_k(t)}. Fix $k \in \{1,\dots,n-1\}$ and $t \in [0,\frac{1}{\sqrt{n}}]$. Then \eqref{eq:m_k(t)} follows from the following elementary lemma, applied to $u = |u_k| = \frac{1}{\sqrt{k}}$ and $v = |v_k| = \frac{1}{\sqrt{n-k}}$.
%Consider the isosceles triangle $\conv\{-u_k, u_k, v_k\}$. The function $m_k(t)$ is the minimal length of a segment whose endpoints are on the legs of the triangle and the segment is $t$ away from the origin, equivalently, is tangent to the circle centred at $0$ with radius $t$, which is in the interior of the triangle (note that the distance from the origin to a leg is $\frac{|u_k||v_k|}{\sqrt{|u_k|^2+|v_k|^2}} = \frac{1}{\sqrt{n}}$). We claim that the minimum is attained by a segment which is parallel to the base of the triangle. This segment has length $2(|v_k|-t)\frac{|u_k|}{|v_k|} = 2\frac{1-t\sqrt{n-k}}{\sqrt{k}}$, which gives \eqref{eq:m_k(t)}. To justify the claim, we solve the following elementary problem in planar geometry (which must be well-known).

\begin{lemma}\label{lm:isosceles}
Let $u, v > 0$, $t \in [0,\frac{uv}{\sqrt{u^2+v^2}}]$ and consider an isosceles triangle $\conv\{\pm ue_1,ve_2\}$. The minimal length of a segment with endpoints on the legs $[-ue_1,ve_2)$ and $[ue_1,ve_2)$ at distance $t$ from the origin is $2(v-t)\frac{u}{v}$ (attained if and only if the segment is parallel to the base).
\end{lemma}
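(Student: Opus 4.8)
The plan is to parametrize the admissible segments by the line containing them and to reduce the claim to a one-variable inequality. I place the triangle as in the statement, with the origin at the midpoint of the base $[-ue_1,ue_1]$ and apex at $ve_2$; the two legs then lie on the lines $vx_1+ux_2=uv$ and $-vx_1+ux_2=uv$, each at distance $\frac{uv}{\sqrt{u^2+v^2}}$ from the origin. A segment is at distance $t$ from the origin exactly when the line through it is tangent to the circle of radius $t$ about the origin, so I would write such a line as $x_1\cos\phi+x_2\sin\phi=t$ and compute its two intersection points with the legs by solving two $2\times 2$ linear systems (equivalently, one can work with the endpoints on the legs and invoke Lemma \ref{lm:dist(l,0)}).

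Carrying out this computation, the cross terms should collapse using $\cos^2\phi+\sin^2\phi=1$, and I expect the chord length to simplify to the clean closed form
\[
L(\phi)=\frac{2uv\,(v\sin\phi-t)}{(u^2+v^2)\sin^2\phi-u^2},
\]
which depends on $\phi$ only through $\sigma:=\sin\phi$, reflecting the mirror symmetry of the configuration across the $x_2$-axis. The base-parallel segment corresponds to $\phi=\frac{\pi}{2}$, i.e. $\sigma=1$, and gives $L=\frac{2u(v-t)}{v}$, the conjectured minimum. A line meets both legs precisely when it is strictly flatter than each leg, which translates into $\sigma\in(\sigma_0,1]$ with $\sigma_0=\frac{u}{\sqrt{u^2+v^2}}$ (at $\sigma=\sigma_0$ the line is parallel to a leg and $L\to\infty$). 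On this range both numerator and denominator are positive, the latter since $\sigma>\sigma_0$ and the former since $v\sigma>v\sigma_0\ge t$ (this last step already using the hypothesis).

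It then remains to prove $L(\sigma)\ge\frac{2u(v-t)}{v}$ on $(\sigma_0,1]$. Clearing the positive denominators reduces this to $Q(\sigma)\le 0$ for the explicit quadratic $Q(\sigma)=(v-t)(u^2+v^2)\sigma^2-v^3\sigma+\big(v^2t-(v-t)u^2\big)$, whose leading coefficient is positive. One checks directly that $Q(1)=0$, so $\sigma=1$ is a root and $Q$ factors as $(v-t)(u^2+v^2)(\sigma-1)(\sigma-\rho)$ with $\rho=\frac{v^2t-(v-t)u^2}{(v-t)(u^2+v^2)}$. Since the parabola opens upward, $Q\le 0$ holds exactly on $[\rho,1]$ (assuming $\rho\le 1$), with strict inequality on the interior; hence the minimum of $L$ is attained only at $\sigma=1$, giving the uniqueness of the base-parallel extremizer.

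The one point that needs care — and where the hypothesis $t\le\frac{uv}{\sqrt{u^2+v^2}}$ is genuinely used — is verifying that the admissible range $(\sigma_0,1]$ sits inside $[\rho,1]$, i.e. that $\rho\le\sigma_0$. A short manipulation should show that $\rho\le\sigma_0$ is equivalent to $t\sqrt{u^2+v^2}\le uv$, precisely the standing assumption (with equality exactly at its right endpoint $t=\frac{uv}{\sqrt{u^2+v^2}}$). This is the crux of the argument: it guarantees $Q\le 0$ throughout the geometric domain, thereby establishing the bound together with the fact that equality forces $\sigma=1$, that is, the minimizing segment is parallel to the base.
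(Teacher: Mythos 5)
Your proposal is correct and is essentially the paper's own argument in Cartesian rather than trigonometric dress: with your $\sigma=\sin\phi$ equal to the paper's $\cos\theta$ and $\sin\alpha=\frac{u}{\sqrt{u^2+v^2}}$, your chord-length formula $L(\sigma)=\frac{2uv(v\sigma-t)}{(u^2+v^2)\sigma^2-u^2}$ coincides with the paper's $\frac{2(v\cos\theta-t)\sin\alpha\cos\alpha}{\cos^2\alpha+\cos^2\theta-1}$, and your factorization $Q(\sigma)=(v-t)(u^2+v^2)(\sigma-1)(\sigma-\rho)$ together with $\rho\le\sigma_0\Leftrightarrow t\sqrt{u^2+v^2}\le uv$ is exactly the paper's reduction to $(1-\cos\theta)\bigl((v-t)(1+\cos\theta)-v\cos^2\alpha\bigr)\ge 0$ via $t\le v\sin\alpha$ and $\cos\theta>\sin\alpha$. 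The one overstatement is the claim that a tangent line meets both legs \emph{precisely} when $\sigma\in(\sigma_0,1]$ (false, e.g., for $t=0$ only $\sigma=1$ is realizable), but only the forward inclusion of admissible segments into this range is needed, and that assertion is the same one the paper makes when it notes $\theta\in[0,\tfrac{\pi}{2}-\alpha)$.
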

\begin{figure}[htb]
\begin{center}
\includegraphics[scale=1]{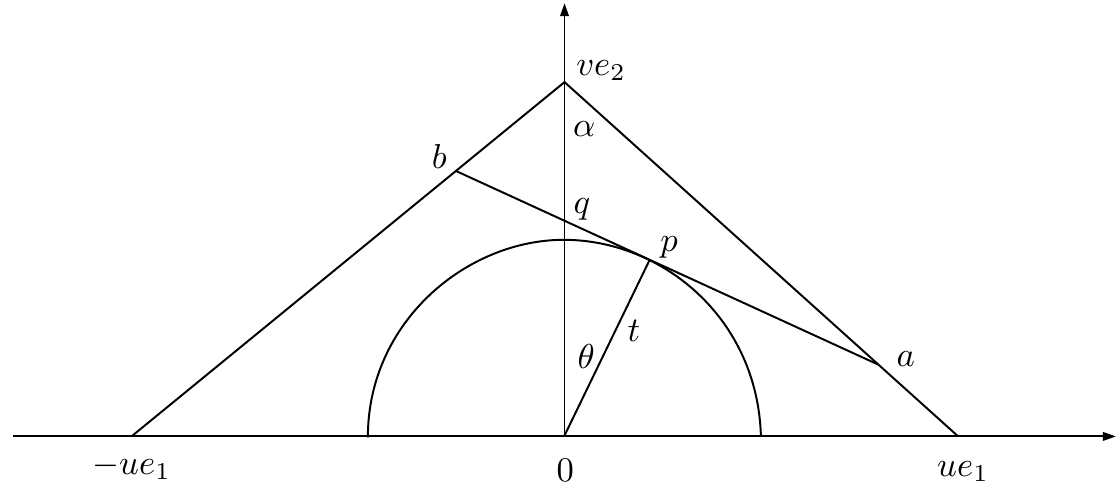}
\end{center}
\caption{Proof of Lemma \ref{lm:isosceles}.}\label{fig:tr}
\end{figure}
\begin{proof}
Let $a$ and $b$ be the endpoints on $[ue_1,ve_2)$ and $[-ue_1,ve_2)$ of such a segment which is tangent to the centred circle of radius $t$ at point, say $p$ (note that the circle is contained in the triangle). Let $\theta$ be the angle between the segments $[0,p]$, $[0,ve_2]$ and let $\alpha$ be the angle between the segments $[0,ve_2]$, $[ve_2,ue_1]$. Note that $\theta \in [0,\frac{\pi}{2}-\alpha)$. Let $q$ be the point of intersection of $[a,b]$ and $[0,ve_2]$ (see Figure \ref{fig:tr}). We have $|q-ve_2| = v - \frac{t}{\cos\theta}$ and by the sine rule, $|a-q| = \left( v - \frac{t}{\cos\theta}\right)\frac{\sin\alpha}{\sin(\frac{\pi}{2}+\alpha+\theta)}$, $|b-q| = \left( v - \frac{t}{\cos\theta}\right)\frac{\sin\alpha}{\sin(\frac{\pi}{2}+\alpha-\theta)}$. Thus,
\[
|a-b| = \left( v - \frac{t}{\cos\theta}\right)\sin\alpha\left(\frac{1}{\sin(\frac{\pi}{2}+\alpha+\theta)} + \frac{1}{\sin(\frac{\pi}{2}+\alpha-\theta)}\right),
\]
or, after simplifying,
\[
|a-b| = \frac{2(v\cos\theta-t)\sin\alpha\cos\alpha}{\cos^2\alpha+\cos^2\theta-1}.
\]
We want to show that $|a-b| \geq 2(v-t)\frac{u}{v} = 2(v-t)\tan\alpha$ with equality if and only if $\theta = 0$ (the segment $[a,b]$ is parallel to the base). Since the denominator is positive, $\cos^2\alpha+\cos^2\theta-1 > \cos^2\alpha + \cos^2(\frac{\pi}{2}-\alpha) - 1 > 0$, our desired inequality is equivalent to
\[
(v\cos\theta-t)\cos^2\alpha \geq (v-t)(\cos^2\alpha+\cos^2\theta-1)
\]
which becomes
\[
(1-\cos\theta)\Big((v-t)(1+\cos\theta) - v\cos^2\alpha\Big) \geq 0.
\]
When $\theta = 0$, we have equality. When $0 < \theta < \frac{\pi}{2}-\alpha$, we use $\cos\theta > \sin\alpha$, $t \leq \frac{uv}{\sqrt{u^2+v^2}} = v\sin\alpha$ and get 
\[
(v-t)(1+\cos\theta) - v\cos^2\alpha > (v - v\sin\alpha)(1 + \sin\alpha) - v\cos^2\alpha = 0,
\]
which proves the strict inequality, as desired.
\end{proof}

\begin{remark}\label{rem:t=1/sqrtn}
When $t = \frac{1}{\sqrt{n}}$, the set of lines $t$ away from the origin is larger than the set of all lines entirely contained in the hyperplane of a facet of $B_1^n$ and passing though its centroid. Asking about a minimal section of the latter amounts to asking about a minimal length section of a regular simplex by lines passing through its centroid. From \eqref{eq:min-1-dim}, we get a lower bound on this quantity by $2\left(1 - \sqrt{\frac{n-1}{n}}\right) = \frac{2}{n + \sqrt{n(n-1)}}$ for an $n-1$ dimensional regular simplex with edge length $\sqrt{2}$. This bound is not tight. For results about extremal central sections of a simplex see \cite{Brz, Dir, Webb}. This $1$-dimensional case however does not seem to have appeared anywhere explicitly. We provide results about extremal $1$-dimensional central sections in passing here. For concreteness, consider the $n-1$-dimensional regular simplex $S_n = \conv\{e_1,\ldots,e_n\}$ embedded in $\R^n$ and the set $L_0$ of all lines which lie in the hyperplane spanned by $S_n$ and pass through its centroid $z = \frac{\sum_{i=1}^n e_i}{n}$. With this notation, we have the following (folklore) theorems.
\end{remark}

\begin{theorem}
We have 
\[
\min_{\ell \in L_0} |S_n \cap \ell| = \frac{2\sqrt{2}}{n}.
\]
Moreover, the minimum is attained if and only if $\ell$ is parallel to one of the edges of $S_n$ (the line $\ell$ is along a direction $v = \frac{e_i - e_j}{\sqrt{2}}$ for $i \neq j$).  
\end{theorem}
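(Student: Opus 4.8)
The plan is to parametrise the lines in $L_0$ explicitly and collapse the whole question to a two–variable optimisation. The affine hull of $S_n$ is the hyperplane $\{x \in \R^n : \sum_{i=1}^n x_i = 1\}$, whose direction space is $V = \{v \in \R^n : \sum_{i=1}^n v_i = 0\}$, and on this hyperplane $S_n = \{x : \sum_i x_i = 1, \ x_i \geq 0\}$ is the standard simplex. Thus every $\ell \in L_0$ can be written as $\ell = \{z + sv : s \in \R\}$ for a unit vector $v \in V$, and I would first note that $z + sv \in S_n$ precisely when $\frac{1}{n} + s v_i \geq 0$ for all $i$, since the coordinate sum is automatically equal to $1$ for every $s$.

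Writing $M = \max_i v_i$ and $m = -\min_i v_i$ (both strictly positive, because $v \neq 0$ has zero coordinate sum), the constraints amount to $s \in [-\frac{1}{nM}, \frac{1}{nm}]$, the two binding inequalities coming from the largest and the smallest coordinate of $v$. Since $|v| = 1$, the section length is the length of this $s$–interval, so
\[
|S_n \cap \ell| = \frac{1}{n}\left( \frac{1}{M} + \frac{1}{m}\right).
\]
The key observation is that the maximum and the minimum of the $v_i$ are attained at two \emph{distinct} coordinates, hence $M^2 + m^2 \leq \sum_{i=1}^n v_i^2 = 1$. It therefore remains to minimise $\frac{1}{M} + \frac{1}{m}$ over all $M, m > 0$ subject to $M^2 + m^2 \leq 1$.

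This planar problem is immediate: $\frac{1}{M} + \frac{1}{m}$ is decreasing in each variable, so the minimum lies on the arc $M^2 + m^2 = 1$, where a Lagrange multiplier computation (or the substitution $M = \cos\phi$, $m = \sin\phi$) yields the minimiser $M = m = \frac{1}{\sqrt 2}$ with value $2\sqrt 2$. Consequently $|S_n \cap \ell| \geq \frac{2\sqrt2}{n}$, and this bound is attained by the direction $v = \frac{e_i - e_j}{\sqrt 2}$, for which $M = m = \frac{1}{\sqrt2}$ gives exactly $\frac{2\sqrt2}{n}$. For the equality characterisation I would trace back the two inequalities used: equality in $M^2 + m^2 \leq 1$ forces all coordinates of $v$ other than the two extreme ones to vanish, while equality in the planar optimisation forces $M = m = \frac{1}{\sqrt2}$; together these pin down $v = \frac{e_i - e_j}{\sqrt2}$, i.e. $\ell$ parallel to an edge of $S_n$. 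I do not expect a serious obstacle here — the only point needing a little care is recognising that the constraint $M^2 + m^2 \leq 1$ comes for free, since it is precisely what reduces the $n$–dimensional question to a transparent $2$–dimensional one and simultaneously delivers the equality cases.
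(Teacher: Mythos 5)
Your proposal is correct and follows essentially the same route as the paper: the same parametrisation of the line through the centroid, the same section-length formula $\frac{1}{n}\left(\frac{1}{M}+\frac{1}{m}\right)$, and the same reduction to minimising this quantity subject to $M^2+m^2\leq 1$, with identical equality analysis. The only cosmetic difference is that the paper settles the two-variable problem with the harmonic--quadratic mean inequality rather than your monotonicity-plus-Lagrange argument.
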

\begin{proof}
Fix a unit vector $v$ in $\R^n$ such that $\sum v_i = 0$. If $\ell$ is a line in the direction of $v$, passing through $z$, then it can be checked that $|S_n \cap \ell| = \frac{1}{n}\left(\frac{1}{\max v_i} - \frac{1}{\min v_i}\right)$. Let $a = \max v_i$ and $b = -\min v_i$. Note that $a, b \in (0,1)$ and $a^2 + b^2 \leq 1$. By the harmonic-quadratic mean inequality, we have
\[
\frac{1}{\max v_i} - \frac{1}{\min v_i} = \frac{1}{a} + \frac{1}{b} \geq \frac{2}{\sqrt{\frac{a^2+b^2}{2}}} \geq 2\sqrt{2}
\]
and the first inequality holds if and only if $a = b$, whereas the second one if and only if $a^2 + b^2 = 1$, that is the equalities hold if and only if $a = b = \frac{1}{\sqrt{2}}$. This is the case if and only if $v$ is supported on exactly $2$ coordinates.
\end{proof}

\begin{theorem}
We have 
\[
\max_{\ell \in L_0} |S_n \cap \ell| = \sqrt{\frac{n}{n-1}}.
\]
Moreover, the maximum is attained if and only if $\ell$ passes through one of the vertices of $S_n$.  
\end{theorem}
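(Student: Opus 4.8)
The plan is to reuse the chord-length formula established in the proof of the preceding theorem: if $\ell \in L_0$ has direction a unit vector $v$ with $\sum_i v_i = 0$, then $|S_n \cap \ell| = \frac{1}{n}\left(\frac{1}{\max_i v_i} - \frac{1}{\min_i v_i}\right)$. Since $v \neq 0$ and $\sum_i v_i = 0$, we have $\max_i v_i > 0 > \min_i v_i$, so writing $a = \max_i v_i$ and $b = -\min_i v_i$ (both positive) this becomes $|S_n \cap \ell| = \frac{1}{n}\left(\frac{1}{a} + \frac{1}{b}\right)$. Thus the whole problem reduces to maximizing $\frac{1}{a}+\frac{1}{b}$ over all pairs $(a,b)$ that arise as $(\max_i v_i, -\min_i v_i)$ of some mean-zero unit vector $v$.

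I would then extract two constraints on $(a,b)$. First, on $[-b,a]$ the convex parabola lies below its chord, i.e. $x^2 \le ab + (a-b)x$ for all $x \in [-b,a]$ (both sides agree at $x=-b$ and $x=a$). Summing this over the coordinates $v_i \in [-b,a]$ and using $\sum_i v_i = 0$, $\sum_i v_i^2 = 1$ yields $1 \le nab$, that is $ab \ge \frac{1}{n}$. Second, the coordinate equal to the maximum $a$ together with $\sum_i v_i = 0$ forces the remaining $n-1$ coordinates, each at least $-b$, to sum to $-a$; hence $-a \ge -(n-1)b$, i.e. $a \le (n-1)b$, and symmetrically $b \le (n-1)a$.

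It remains to solve the two-variable problem: maximize $\frac{1}{a}+\frac{1}{b}$ subject to $ab \ge \frac1n$ and $\frac{1}{n-1} \le \frac{a}{b} \le n-1$. Setting $s = \sqrt{ab}$ and $r = a/b$ one has $\frac{1}{a}+\frac{1}{b} = \frac{1}{s}\left(\sqrt{r}+\frac{1}{\sqrt{r}}\right)$. Using $s \ge \frac{1}{\sqrt n}$ and that $r \mapsto \sqrt{r}+\frac{1}{\sqrt r}$ is maximized on $[\frac{1}{n-1},n-1]$ at the endpoints, where its value is $\frac{n}{\sqrt{n-1}}$, I obtain $\frac{1}{a}+\frac{1}{b} \le \sqrt{n}\cdot\frac{n}{\sqrt{n-1}} = \frac{n^{3/2}}{\sqrt{n-1}}$, and therefore $|S_n \cap \ell| \le \frac{\sqrt n}{\sqrt{n-1}} = \sqrt{\frac{n}{n-1}}$.

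For the equality case I would trace back when both constraints are tight. Equality $ab = \frac1n$ forces every $v_i \in \{a,-b\}$ (tightness of the chord bound), while $a = (n-1)b$ forces all but one coordinate to equal $-b$; together these give one coordinate equal to $a$ and the other $n-1$ equal to $-b$ (or the reflected configuration), which is precisely the normalized direction $e_i - z$, so $\ell$ passes through the vertex $e_i$. A direct computation then verifies that this vertex direction indeed attains $\sqrt{\frac{n}{n-1}}$, finishing the proof. The only mildly delicate point is recognizing that the sum-of-squares bound $ab \ge \frac1n$ is by itself too weak (it even allows $\frac1a+\frac1b \to \infty$) and must be combined with the mean-zero ``spread'' constraint $a \le (n-1)b$ to pin down the optimum.
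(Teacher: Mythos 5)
Your proof is correct, and it takes a genuinely different route from the paper's. The paper explicitly abandons the direction-vector formula for this maximum problem (``it will be more convenient to use a different setup here''): it parametrizes the chord by its endpoint $x$ on a facet, reduces to maximizing $\bigl(1+\frac{1}{n\max x_i-1}\bigr)\sqrt{\sum x_i^2-\frac1n}$ over the probability simplex, bounds $\sum x_i^2\leq \max x_i$, and finishes with a one-variable analysis of $f(t)=t/\sqrt{t-1}$ on $[\frac{n}{n-1},n]$. You instead keep the formula $|S_n\cap\ell|=\frac1n\bigl(\frac1a+\frac1b\bigr)$, $a=\max v_i$, $b=-\min v_i$, which the paper uses only for the minimum, and make it work for the maximum by extracting two realizability constraints: the secant bound $x^2\leq ab+(a-b)x$ on $[-b,a]$ summed over coordinates gives $ab\geq\frac1n$, and mean-zero gives the spread constraint $\frac{1}{n-1}\leq a/b\leq n-1$; the factorization $\frac1a+\frac1b=\frac1s\bigl(\sqrt r+\frac1{\sqrt r}\bigr)$ with $s=\sqrt{ab}$, $r=a/b$ then decouples the two constraints and yields the bound, with equality tracing cleanly back to the vertex directions $e_i-z$. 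Both constraint derivations, the optimization, and the equality analysis check out (in fact tightness of $a=(n-1)b$ alone already pins down the configuration, after which $ab=\frac1n$ holds automatically). Your remark at the end is the right diagnosis and is exactly why the paper switched setups: $ab\geq\frac1n$ alone, or even combined with $a^2+b^2\leq1$, does not suffice, and the mean-zero spread constraint is the missing ingredient. What your approach buys is a unified treatment of the minimum and maximum theorems from a single chord-length formula, with the extremizers emerging transparently from tightness of the two constraints; what the paper's endpoint parametrization buys is that realizability of $(a,b)$ never has to be addressed, since one optimizes directly over actual boundary points.
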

\begin{proof}
It will be more convenient to use a different setup here. Fix $\ell \in L_0$. Then $S_n \cap \ell$ is a segment, say with endpoints $x$ and $y$ which are on the boundary of $S_n$. By symmetry, we can assume that $x \in \conv\{e_1,\ldots, e_{n-1}\}$, that is $x = (x_1,\ldots,x_{n-1},0)$ with nonnegative $x_i$ such that $\sum x_i = 1$. The other endpoint $y$ is given by the intersection of the line passing through $x$ and $z$ with the boundary of $S_n$. Every point on this line is of the form $z + t(z-x)$, $t \in \R$ and $y$ is given by the smallest positive $t$ such that at least one of the coordinates of this point is $0$, which gives $t = \frac{1}{n(\max_{1 \leq i \leq n-1} x_i)-1}$ and the length of the section is $|S_n \cap \ell| = |y - x| = (1+t)|z-x|$. Thus
\[
\max_{\ell \in L_0} |S_n \cap \ell| = 
\max_{x \in [0,1]^{n-1}, \sum_{i=1}^{n-1} x_i = 1}\left(1 + \frac{1}{n(\max_{1 \leq i \leq n-1} x_i) - 1}\right)\sqrt{\sum_{i=1}^{n-1} \left(\frac{1}{n}-x_i\right)^2 + \frac{1}{n^2}}.
\]
We fix a vector $x \in \R^{n-1}$ with nonnegative coordinates adding up to $1$. Then 
\[
\sum_{i=1}^{n-1} \left(\frac{1}{n} - x_i\right)^2 + \frac{1}{n^2} = \sum_{i=1}^{n-1} x_i^2 - \frac{1}{n}. 
\] 
Let $a = \max_{1 \leq i \leq n-1} x_i$. We have $\sum x_i^2 \leq a\sum x_i = a$ with equality if and only if the positive coordinates of $x$ are equal. Thus,
\begin{align*}
\left(1 + \frac{1}{n(\max  x_i) - 1}\right)\sqrt{\sum_{i=1}^{n-1} \left(\frac{1}{n}-x_i\right)^2 + \frac{1}{n^2}} &= \frac{na}{na-1}\sqrt{\sum_{i=1}^{n-1} x_i^2 - \frac{1}{n}} \\
&\leq \frac{na}{na-1}\sqrt{a - \frac{1}{n}} = \frac{1}{\sqrt{n}} \frac{na}{\sqrt{na-1}}.
\end{align*}
Note that $na \in [\frac{n}{n-1},n]$. The function $f(t) = \frac{t}{\sqrt{t-1}}$ is decreasing on $(1,2)$ and increasing on $(2,\infty)$. Moreover, $f(\frac{n}{n-1}) = f(n) = \frac{n}{\sqrt{n-1}}$. Consequently, $f(na) \leq \frac{n}{\sqrt{n-1}}$ with equality if and only if $a = \frac{1}{n-1}$ (equivalent to $x = \frac{\sum_{i=1}^{n-1} e_i}{n-1}$) or $a = 1$ (equivalent to $x = e_i$ for some $i \leq n-1$). We have thus obtained that 
\[
\left(1 + \frac{1}{n(\max  x_i) - 1}\right)\sqrt{\sum_{i=1}^{n-1} \left(\frac{1}{n}-x_i\right)^2 + \frac{1}{n^2}} \leq \frac{1}{\sqrt{n}} \frac{na}{\sqrt{na-1}} \leq \sqrt{\frac{n}{n-1}}
\]
with the same equality cases for both estimates. It remains to notice that they correspond to lines passing through a vertex of $S_n$.
\end{proof}

\subsection{Sections by hyperplanes at distance $t > \frac{1}{\sqrt{2}}$: Proof of Theorem~\ref{thm:max-hyp}}

Fix $t \in (\frac{1}{\sqrt{2}},1]$ and let $H$ be a hyperplane $t$ away from the origin. The case $t=1$ is clear. Let $t < 1$. Then $H$ intersects the interior of $B_1^n$ and separates exactly one of its vertices from the origin (it cannot separate two or more vertices because the edges of $B_1^n$ are $\frac{1}{\sqrt{2}}$ away from the origin). By symmetry, we can assume that the separated vertex is $e_1$. There are $2n-2$ edges coming out of $e_1$: $[e_1,\varepsilon e_k]$, $k = 2,\dots,n$, $\varepsilon \in \{-1,1\}$. Suppose $H$ intersects the edge $[e_1,\varepsilon e_k]$ at a point $v_{\varepsilon e_k} = \lambda_{\varepsilon, k} e_1 + (1-\lambda_{\varepsilon, k})\varepsilon e_k$. Let $a$ be a unit vector normal to $H$, so that $H$ is given by the equation $\scal{a}{x} = t$. We remark that
\begin{equation}\label{eq:dom-a}
a_1 \in (t,1], \qquad a_k^2 \leq 1 - a_1^2 < 1 - t^2 < t^2 < a_1^2, \qquad \text{for every $k = 2,\dots, n$}
\end{equation}
(since $H$ separates $e_1$, we have $a_1 = \scal{e_1}{a} > t$; then $a_1^2 + a_k^2 \leq \sum_{i=1}^n a_i^2 = 1$).
We find that $\lambda_{\varepsilon, k} = \frac{t - \varepsilon a_k}{a_1 - \varepsilon a_k}$, thus $v_{\varepsilon e_k} = \frac{t - \varepsilon a_k}{a_1-\varepsilon a_k}e_1 + \frac{a_1-t}{a_1-\varepsilon a_k}\varepsilon e_k$. Let $S = B_1^n \cap \{x \in \R^n, \ \scal{x}{a} \geq t\}$ be the chopped-off part of $B_1^n$ by the hyperplane $H$. Note that $S$ is the union of $2^{n-2}$ simplices $S_{\bar \varepsilon}$ with pairwise disjoint interiors,
\[
S_{\bar \varepsilon} = \conv\{e_1, v_{\varepsilon_2 e_2}, \dots, v_{\varepsilon_{n-1} e_{n-1}}, v_{-e_n}, v_{e_n}\}, \qquad \bar \varepsilon = (\varepsilon_2,\ldots,\varepsilon_{n-1}) \in \{-1,1\}^{n-2}.
\] 
We can find their volume by evaluating appropriate determinants,
\begin{align*}
|S_{\bar \varepsilon}| &= \frac{1}{n!}|\det[ v_{\varepsilon_2 e_2} - e_1, v_{\varepsilon_3 e_3}, \dots, v_{\varepsilon_{n-1} e_{n-1}} - e_1, v_{-e_n} - e_1, v_{e_n} - e_1 ]|\\
&= \frac{1}{n!}|\det\left[\begin{matrix}
\frac{t-a_1}{a_1-\varepsilon_{2}a_2} & \frac{t-a_1}{a_1-\varepsilon_{3}a_3} & \cdots & \frac{t-a_1}{a_1-\varepsilon_{n-1}a_{n-1}} & \frac{t-a_1}{a_1+a_n} & \frac{t-a_1}{a_1-a_n}\\
\frac{\varepsilon_2(a_1-t)}{a_1-\varepsilon_2a_2} & 0 & \cdots & 0 & 0 & 0\\
0 & \frac{\varepsilon_3(a_1-t)}{a_1-\varepsilon_3a_3} & \cdots & 0 & 0 & 0\\
\vdots & \vdots & & \vdots & \vdots & \vdots\\
0 & 0 & \cdots & \frac{\varepsilon_{n-1}(a_1-t)}{a_1-\varepsilon_{n-1}a_{n-1}} & 0 & 0\\
0 & 0 & \cdots & 0 & \frac{t-a_1}{a_1+a_n} & \frac{a_1-t}{a_1-a_n}
\end{matrix}\right]|\\
&=\frac{2(a_1-t)^n}{n!(a_1^2-a_n^2)\prod_{i=2}^{n-1}(a_1-\varepsilon_ia_i)}.
\end{align*}
Thus, the volume of $S$ is
\begin{align*}
|S|=\sum_{\bar\varepsilon\in\{-1,1\}^{n-2}}|S_{\bar\varepsilon}| &= \frac{2(a_1-t)^n}{n!(a_1^2-a_n^2)}\sum_{\bar\varepsilon\in\{-1,1\}^{n-2}}\prod_{i=2}^{n-1}\frac{1}{a_1-\varepsilon_ia_i}\\
&=\frac{2(a_1-t)^n}{n!(a_1^2-a_n^2)}\prod_{i=2}^{n-1}\left(\frac{1}{a_1-a_i}+\frac{1}{a_1+a_i}\right)\\
&=\frac{2^{n-1}(a_1-t)^na_1^{n-2}}{n!\prod_{i=2}^{n}(a_1^2-a_i^2)}.
\end{align*}
On the other hand,
\begin{equation}\label{eq:volS}
|S| = \frac{1}{n}|B_1^n \cap H|\cdot\mathrm{dist}(e_1,H) = \frac{1}{n}|B_1^n \cap H|\cdot (a_1-t).
\end{equation}
which gives
\[
|B_1^n \cap H| = \frac{2^{n-1}}{(n-1)!}\frac{a_1^{n-2}(a_1-t)^{n-1}}{\prod_{i=2}^n(a_1^2-a_i^2)}.
\]
To finish the proof, it remains to show that for every $t \in (\frac{1}{\sqrt{2}},1)$ and for every unit vector $a \in \R^n$ such that $a_1 > t$, we have
\begin{equation}\label{eq:minH}
a_1^{n-2}(a_1-t)^{n-1}\leq (1-t)^{n-1}\prod_{i=2}^n(a_1^2-a_i^2)
\end{equation}
with equality if and only if $a = e_1$ (recall \eqref{eq:dom-a}). 

\bigskip
\noindent
\emph{Case 1:} $n=3$. Using $a_1^2+a_2^2+a_3^2 = 1$, we get $\prod_{i=2}^n(a_1^2-a_i^2) = (a_1^2-a_2^2)(a_1^2-a_3^2) = a_1^4-a_1^2(a_2^2+a_3^2) + a_2^2a_3^2 \geq a_1^4 - a_1^2(a_2^2+a_3^2) = 2a_1^4-a_1^2 = a_1^2(2a_1^2-1)$. It is then enough to show that for every $x \in (t,1)$, we have
\[
x(x-t)^2 < (1-t)^2x^2(2x^2-1).
\]
Consider the function $f(x) = (1-t)^2x(2x^2-1)-(x-t)^2$. Since $f''(x) = 12(1-t)^2x - 2 < 12\left(1 - \frac{1}{\sqrt{2}}\right)^2 - 2 < 0$, we have that $f$ is strictly concave on $[t,1]$. Since $f(t) > 0$ and $f(1) = 0$, we get that $f(x) > 0$ for $x \in (t,1)$, which finishes the proof of \eqref{eq:minH} when $n = 3$.

\bigskip
\noindent
\emph{Case 2:} $n\geq 4$. Since $a_i^2 \leq 1-a_1^2$, for every $i \geq 2$, we obtain $\prod_{i=2}^n(a_1^2 - a_i^2) \geq (2a_1^2-1)^{n-1}$. It is then enough to show that for every $x \in (t,1)$, we have
\[
x^{n-2}(x-t)^{n-1} < (1-t)^{n-1}(2x^2-1)^{n-1}.
\]
Consider the function $f(x) = (1-t)(2x^2-1)x^{-\frac{n-2}{n-1}} - (x-t)$. Let $\alpha = \frac{n-2}{n-1}$. Note that $\alpha \in [\frac{2}{3},1)$. For $x \in [t,1]$, we thus have
\begin{align*}
\frac{x^{\alpha+2}}{1-t}f''(x) = 2(2-\alpha)(1-\alpha)x^{2} - \alpha(\alpha+1) &\leq 2(2-\alpha)(1-\alpha) - \alpha(\alpha+1) \\
&= \alpha^2-7\alpha + 4 < 0.
\end{align*}
Consequently, $f$ is strictly concave on $[t,1]$. Since $f(t) > 0$ and $f(1) = 0$, we get that $f(x) > 0$ for $x \in (t,1)$, which finishes the proof of \eqref{eq:minH} when $n \geq 4$. This completes the proof of \eqref{eq:max-hyp}.

\subsection{Sections by slabs of width $t > \frac{1}{\sqrt{2}}$: Proof of Theorem~\ref{thm:min-slab}}

When $t = 1$, the theorem is clear. Let $a$ be a unit vector in $\R^n$ and fix $t \in (\frac{1}{\sqrt{2}},1)$. Note that the complement (in $B_1^n$) of the intersection of $B_1^n$ and the slab $\{x \in \R^n, \ |\scal{x}{a}| \leq t\}$ is exactly ``twice'' the chopped-off part of $B_1^n$ by the hyperplane $H = \{x \in \R^n, \ \scal{x}{a} = t\}$ (say it separates vertex $e_1$), as we analysed it in the previous section. In particular, from \eqref{eq:volS}, we immediately get
\[
|B_1^n \cap \{x \in \R^n, \ |\scal{x}{a}| \leq t\}| = |B_1^n| - 2|S| = |B_1^n| - \frac{2}{n}|B_1^n \cap H|\cdot (a_1-t)
\]
We already know that $|B_1^n \cap H|$ is maximised only at $a = e_1$ and, plainly, the same holds for $a_1-t$. This immediately gives \eqref{eq:min-slab}.

\nocite{*}

\end{document}